\documentclass{svmult}

\renewcommand{\email}[1]{\emailname: #1} 

\usepackage{mathptmx}       
\usepackage{helvet}         
\usepackage{courier}        

\usepackage{makeidx}         
\usepackage{graphicx}        
\usepackage[bottom]{footmisc}

\usepackage{latexsym}
\usepackage{amsmath}
\usepackage{amsfonts}
\usepackage{amssymb}
\usepackage{bm}

\usepackage{url}
\usepackage[misc,geometry]{ifsym}

\renewenvironment{proof}{\noindent{\itshape Proof.}}{\smartqed\qed}

\spdefaulttheorem{assumption}{Assumption}{\upshape \bfseries}{\itshape}
\spdefaulttheorem{algo}{Algorithm}{\upshape \bfseries}{\itshape}









\newcommand{\N}{{\mathbb{N}}} 


\DeclareSymbolFont{bbold}{U}{bbold}{m}{n}
\DeclareSymbolFontAlphabet{\mathbbold}{bbold}






\begin{document}

\title*{Analysis of Framelet Transforms on a Simplex}

\author{Yu Guang Wang \and Houying Zhu}

\institute{
 Yu Guang Wang (\Letter)
 \at The University of New South Wales, Sydney, Australia \\
 \email{yuguang.wang@unsw.edu.au}
 \and
 Houying Zhu
 \at The University of Melbourne, Melbourne, Australia \\
 \email{houying.zhu@unimelb.edu.au}
}

\maketitle

\index{Wang, Y. G.}
\index{Zhu, H.}

\paragraph{Dedicated to Ian H. Sloan on the occasion of his 80th birthday with\\ our gratitude for his constant supervision, support and encouragement.}

\abstract{
In this paper, we construct framelets associated with a sequence of quadrature rules on the simplex $T^{2}$ in $\mathbb{R}^{2}$. We give the framelet transforms --- decomposition and reconstruction of the coefficients for framelets of a function on $T^{2}$. We prove that the reconstruction is exact when the framelets are tight. We give an example of construction of framelets and show that the framelet transforms can be computed as fast as FFT.
}

\section{Introduction}
\label{sec:intro}
Multiresolution analysis on a simplex $T^{2}$ in $\mathbb{R}^{2}$ has many applications such as in numerical solution of PDEs and computer graphics \cite{Dyn1992,deFeDeTo2016,GrCoMaDe2017}.
In this paper, we construct framelets (or a framelet system) on $T^{2}$, following the framework of \cite{WaZh2017}, and give the transforms of coefficients for framelets.

Framelets are localised functions associated with quadrature rules of $T^{2}$. 
Each framelet is scaled at a level $j$, $j=0,1,\dots$ and translated at a node of a quadrature rule of level $j$. The \emph{framelet coefficients} for a square-integrable function $f$ on the simplex are the inner products of the framelets with $f$ on $T^{2}$.
We give the \emph{framelet transforms} which include the \emph{decomposition} and \emph{reconstruction} of the coefficients for framelets. Since the framelets are well-localised, see e.g. \cite{MaMh2008}, the decomposition gives all \emph{approximate} and \emph{detailed} information of the function $f$. This plays an important role in signal processing on the simplex.

For levels $j$ and $j+1$, the decomposition estimates the framelet coefficients of level $j+1$ by the coefficients of level $j$. The reconstruction is the inverse, which estimates the coefficients of level $j$ by the level $j+1$.
Such framelet transforms are significant as by decompositions or reconstructions, we are able to estimate high-level framelet coefficients from the bottom level $0$, or the inverse.

We show that when the quadrature rules and masks have good properties, the reconstruction is exact and invertible with the decomposition, see Section~\ref{sec:reconstruction}.
We also show that the framelet transforms can be computed as fast as the FFTs, see Section~\ref{sec:fast.comput}.

We construct framelets using tensor-product form of Jacobi polynomials and triangular Kronecker lattices \cite{BaOw2015} with equal weights, see Section~\ref{sec:constr.example}.

\section{Framelets on Simplex}

In the paper, we consider the \emph{simplex} (or the \emph{triangle}) 
\begin{equation*}
	T^{2}:=\{\boldsymbol{x}:=(x_{1},x_{2})| x_{1}\ge0, x_{2}\ge0, x_{1}+x_{2}\le1\}.
\end{equation*}
Let $L_{2}(T^2)$ be the space of complex-valued square integrable functions on $T^{2}$ with respect to the normalized Lebesgue area measure $\mu$ on $\mathbb{R}^{2}$ (i.e. $\int_{T^{2}}\mathrm{d}{\mu(\boldsymbol{x})}=1$), provided with inner product
$\langle f,g \rangle:=\langle f,g \rangle_{L_{2}(T^2)}:=\int_{T^{2}}f(\boldsymbol{x})\mathrm{d}{\mu(\boldsymbol{x})}$ 
, where $\overline{g}$ is the complex conjugate of $g$, and endowed with
 the induced $L_{2}$-norm $\| f \|_{L_{2}(T^2)}:=\sqrt{\langle{f,f}\rangle}$ for $f\in L_{2}(T^2)$.

For $\ell\ge0$, let $\mathcal{V}_{\ell}:=\mathcal{V}_{\ell}(T^{2})$ be the space of orthogonal polynomials of degree $\ell$ with respect to the inner product $\langle {\cdot,\cdot}\rangle_{L_{2}(T^2)}$. The dimension of $\mathcal{V}_{\ell}$ is $\ell+1$, see \cite{DuXu2014}. The elements of $\mathcal{V}_{\ell}$ are said to be the \emph{polynomials} of degree $\ell$ on $T^{2}$. The union of all polynomial spaces $\cup_{\ell=0}^{\infty}\mathcal{V}_{\ell}$ is dense in $L_{2}(T^2)$. 

As a compact Riemannian manifold, the simplex $T^{2}$ has the Laplace-Beltrami operator
\begin{equation*}
  \Delta:=\sum_{i=1}^{2}x_{i}(1-x_{i})\frac{\partial^{2}}{\partial 
  {x}_{i}^{2}} - 2 \sum_{1\le i\le j\le2}x_{i}x_{j}\frac{\partial^{2}}{\partial x_{i} \partial x_{j}} + \sum_{i=1}^{2}(1-3 x_{i})\frac{\partial}{\partial {x}_i},
\end{equation*}
with polynomials $P_{\ell}$ in $\mathcal{V}_{\ell}$ as the eigenfunctions and with (square-rooted) eigenvalues $\lambda_{\ell}:=\sqrt{\ell(\ell+2)}$:
\begin{equation*}\label{eq:LBm.simplex}
  -\Delta P_{\ell} = \lambda_{\ell}^{2}\: P_{\ell},\quad \ell\in\mathbb{N}_{0},
\end{equation*}
where $\mathbb{N}_{0}:=\{0,1,2,\dots\}$.

Let $L_{1}(\mathbb{R})$ be the space of absolutely integrable functions on $\mathbb{R}$ with respect to the Lebesgue measure and let $l_{1}(\mathbb{Z})$ be the set of $l_{1}$ summable sequences on $\mathbb{Z}$. For $r\ge1$, let $\Psi:=\{\alpha; \beta^1,\ldots,\beta^r\}$ be a set of $(r+1)$ functions in $L_{1}(\mathbb{R})$, which are associated with a \emph{filter bank} ${\boldsymbol{\eta}}:=\{a; b_{1},\ldots,b_{r}\}\subset l_1(\mathbb{Z})$ satisfying
\begin{equation}
\label{eq:refinement}
    \widehat{\alpha}(2\xi) = \widehat{a}(\xi)\widehat{\alpha}(\xi),\quad
    \widehat{\beta^n}(2\xi) = \widehat{b_{n}}(\xi)\widehat{\alpha}(\xi),\quad n=1,\ldots,r, \; \xi\in\mathbb{R},
\end{equation}
where $\widehat{g}(\xi):=\int_{\mathbb{R}}g(x)e^{-2\pi i x\xi} \:\mathrm{d}{x}$, $\xi\in\mathbb{R}$ is the Fourier transform for $g\in L_{1}(\mathbb{R})$ and $\widehat{h}(\xi):=\sum_{k\in\mathbb{Z}}h_{k}e^{-2\pi {\mathrm{i}} \xi}$ is the Fourier series of a sequence $h:=(h_k)_{k\in\mathbb{Z}}$ in $l_{1}(\mathbb{Z})$. Here, the sequences $a$ and $b_{n}$ are said to be \emph{low-pass (mask)} and \emph{high-pass (mask)} respectively. 

We introduce the continuous and semi-discrete framelets on the simplex following the construction and notation of \cite{WaZh2017,Dong2015}. The \emph{continuous framelets} on the simplex $T^{2}$ are, for $j\in\mathbb{N}_{0}$,
\begin{equation}\label{eq:intro.cfr}
\begin{array}{l}
    \displaystyle\boldsymbol{\varphi}_{j,\boldsymbol{y}}(\boldsymbol{x}) :=  \sum_{\ell=0}^{\infty}\widehat{\alpha}\left(\frac{\lambda_{\ell}}{2^{j}}\right)\overline{P_{\ell}(\boldsymbol{y})}P_{\ell}(\boldsymbol{x}),\\
    \displaystyle\boldsymbol{\psi}_{j,\boldsymbol{y}}^{n}(\boldsymbol{x}) :=  \sum_{\ell=0}^{\infty}\widehat{\beta^n}\left(\frac{\lambda_{\ell}}{2^{j}}\right)\overline{P_{\ell}(\boldsymbol{y})}P_{\ell}(\boldsymbol{x}),\quad n = 1,\ldots,r.
    \end{array}
\end{equation}

The continuous framelets in \eqref{eq:intro.cfr} are analogues of continuous wavelets in $\mathbb{R}$. The level ``$j$'' indicates the ``dilation'' scale and ``$\boldsymbol{y}$'' is the point at which the framelet is ``translated''.

Let ${Q}_{N}:=\{(w_{j},\boldsymbol{x}_{j})\}_{j=1}^{N}$, which is a set of $N$ pairs of weights $w_{j}$ in $\mathbb{R}\backslash\{0\}$ and points $\boldsymbol{x}_{j}$ on $T^{2}$, define the quadrature rule 
\begin{equation*}
	{Q}_{N} [f]:= \sum_{k=1}^{N} w_j f(\boldsymbol{x}_j)
\end{equation*}
for continuous functions $f$ on $T^{2}$.
Let ${Q}_{N_{j}}:=\{(\omega_{j,k},\boldsymbol{x}_{j,k})\}_{k=1}^{N_{j}}$, $j\in\mathbb{N}_{0}$, be a sequence of such quadrature rules.
For $j=0,1,\dots$, the \emph{semi-discrete framelets} $\boldsymbol{\varphi}_{j,k}$ and $\boldsymbol{\psi}_{j,k'}^{n}$ associated with quadrature rules ${Q}_{N_{j}}$ are defined as the continuous framelets $\boldsymbol{\varphi}_{j,\boldsymbol{y}}$ and $\boldsymbol{\psi}_{j,\boldsymbol{y}}^{n}$ translated at $\boldsymbol{x}_{j,k}$ and $\boldsymbol{x}_{j+1,k'}$ respectively. That is, for $k=1,\dots,N_{j}$,
\begin{equation}\label{eq:fra}
    \boldsymbol{\varphi}_{j,k}(\boldsymbol{x}) := \sqrt{\omega_{j,k}}\:\boldsymbol{\varphi}_{j,\boldsymbol{x}_{j,k}}(\boldsymbol{x}) = \sqrt{\omega_{j,k}}\sum_{\ell=0}^{\infty}\widehat{\alpha}\left(\frac{\lambda_{\ell}}{2^{j}}\right)\overline{P_{\ell}(\boldsymbol{x}_{j,k})}P_{\ell}(\boldsymbol{x}),
\end{equation}
and for $k'=1,\dots,N_{j+1}$ and $n =1,\ldots,r$,
\begin{equation}\label{eq:frb}
    \boldsymbol{\psi}_{j,k'}^{n}(\boldsymbol{x}) := \sqrt{{\omega_{j+1,k'}}}\:\boldsymbol{\psi}_{j,{\boldsymbol{x}_{j+1,k'}}}^{n}(\boldsymbol{x}) = \sqrt{\omega_{j+1,k'}}\sum_{\ell=0}^{\infty} \widehat{\beta^{n}}\left(\frac{\lambda_{\ell}}{2^{j}}\right)\overline{P_{\ell}(\boldsymbol{x}_{j+1,k'})}P_{\ell}(\boldsymbol{x}).
\end{equation}
We say $\boldsymbol{\varphi}_{j,k}$ and $\boldsymbol{\psi}_{j,k'}^{n}$ are \emph{low-pass framelet} and \emph{high-pass framelet} respectively.

Note that here we use the ${Q}_{N_{j+1}}$ for high-passes due to the scale of $\boldsymbol{\psi}_{j,k'}^{n}$ is at $j+1$. This will be clear in Section~\ref{sec:constr.example}.

We also use the notation $\boldsymbol{\psi}_{j,k}^{n}$ for $\boldsymbol{\psi}_{j,k'}^{n}$ if no confusion arises.

The framelets $\boldsymbol{\varphi}_{j,k}$ and $\boldsymbol{\psi}_{j,k}^{n}$ corresponding to the low-pass $a$ and high-pass $b_{n}$ carry the information of approximations and details in framelet transforms, as we will show below.

\section{Decomposition for Framelets}\label{sec:decomposition}
In practice, we need to estimate the framelet coefficients of high levels from low-level coefficients. This can be achieved by the \emph{decomposition} of framelets.

The decomposition for framelets can be realized by the operations of convolution and downsampling as we introduce now.

Let $h\in l_1(\mathbb{Z})$ be a mask satisfying that the support of the Fourier series $\widehat{h}$ of $h$ is a subset of $[0,1/2]$. Let $l(N)$ be the set of complex-valued sequences with supports in $[0,N]$. Let ${Q}_{N_{j}}:=\{(\omega_{j,k},\boldsymbol{x}_{j,k})\}_{k=1}^{N_j}$, $j\in\mathbb{N}_{0}$, be the quadrature rules for framelets. Let $l({Q}_{N_{j}})$ be the set of sequences $\mathrm{v}$ in $l(N_j)$ satisfying that there exists a sequence $\mathrm{u}$ in $\l_1(\mathbb{Z})$ such that 
\begin{equation*}
	(\mathrm{v})_{k} = \sqrt{\omega_{j,k}}\sum_{\ell=0}^{\infty}\mathrm{u}_{\ell}\: P_{\ell}(\boldsymbol{x}_{j,k}),\quad k=1,\dots,N_j.
\end{equation*}
We let $\widehat{\mathrm{v}}_{\ell}:=\mathrm{u}_{\ell}$ (with abuse of notation) be the (generalized) \emph{Fourier coefficients} of $\mathrm{v}$ for the orthonormal basis $P_{\ell}$ and the quadrature rule ${Q}_{N_{j}}$ on $T^{2}$.

The \emph{(discrete) convolution}  $\mathrm{v} \ast_{j} h$ of a sequence $\mathrm{v}$ with the mask $h$ is a sequence in $l({Q}_{N_{j}})$ given by
\begin{equation}\label{eq:dis.conv}
    (\mathrm{v} \ast_{j} h)_{k} := \sum_{\ell=0}^{\infty} \widehat{\mathrm{v}}_{\ell}\: {\widehat{h}}\left(\frac{\lambda_{\ell}}{2^{j}}\right)\sqrt{\omega_{j,k}}\:P_{\ell}(\boldsymbol{x}_{j,k}),\quad k=1,\dots,N_{j}.
\end{equation}
Then, the Fourier coefficients of $\mathrm{v} \ast_{j} h$ are $(\widehat{\mathrm{v} \ast_{j} h})_{\ell} = \widehat{\mathrm{v}}_{\ell}\:{\widehat{h}}\left(\frac{\lambda_{\ell}}{2^{j}}\right)$, $\ell\in\mathbb{N}_{0}$.

The \emph{downsampling} $\mathrm{v}\hspace{-0.8mm}\downarrow_{j}$, $j\ge1$, of a sequence $\mathrm{v}\in l({Q}_{N_{j}})$ is a sequence in $l({Q}_{N_{j-1}})$ given by
\begin{equation}\label{eq:downsample}
    (\mathrm{v}\hspace{-0.8mm}\downarrow_{j})_{k} := \sum_{\lambda_{\ell}\le2^{j-1}} \widehat{\mathrm{v}}_{\ell} \:\sqrt{\omega_{j,k}}\:P_{\ell}(\boldsymbol{x}_{j,k}),\quad k=1,\dots,N_{j-1}.
\end{equation}

For semi-discrete framelets in \eqref{eq:fra} and \eqref{eq:frb}, the inner products $\langle{f,\boldsymbol{\varphi}_{j,k}}\rangle$ and $\langle{f,\boldsymbol{\psi}_{j,k'}^{n}}\rangle$, $n=1,\ldots,r$, $j\in\mathbb{N}_{0}$, $k=1,\dots,N_j$ and $k'=1,\dots,N_{j+1}$, are said to be \emph{framelet coefficients} for $f$.
For convenience, we let $\mathrm{v}_{j}$ and $\mathrm{w}^{n}_{j}$ denote the framelet coefficients for $f$: 
\begin{equation}\label{eq:coe.framelets}
    (\mathrm{v}_{j})_{k} := \langle{f,\boldsymbol{\varphi}_{j,k}}\rangle,\quad (\mathrm{w}^{n}_{j})_{k'} := \langle{f,\boldsymbol{\psi}_{j,k'}^{n}}\rangle.
\end{equation} 

The Fourier coefficients of a function $f\in L_{2}(T^2)$ are $\widehat{f}_{\ell} := \langle{f,P_{\ell}}\rangle$, $\ell\in\mathbb{N}_{0}$. Let $h$ be a mask in $l_1(\mathbb{Z})$ and $h^{\star}$ be the mask whose Fourier series is conjugate to the Fourier series of $h$.

The following proposition shows the decomposition for framelet coefficients between adjacent levels.
\begin{proposition}
\label{prop:decomposition}
Let the framelet coefficients for semi-discrete framelets in \eqref{eq:fra} and \eqref{eq:frb} be given by \eqref{eq:coe.framelets}, where the supports of $\widehat{\alpha}$ and $\widehat{\beta^{n}}$ are subsets of $[0,1/2]$. For $j=1,2,\dots$,
the decomposition from level $j$ into level $j-1$ is  
\begin{equation}
\label{eq:decomp}
  \mathrm{v}_{j-1} = (\mathrm{v}_{j}\ast_{j} a^\star)\hspace{-0.8mm}\downarrow_{j},\quad \mathrm{w}^{n}_{j-1} = \mathrm{v}_{j}\ast_{j} b_{n}^\star,\quad  n=1,\ldots,r.
\end{equation}
\end{proposition}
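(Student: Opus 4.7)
The plan is to reduce both identities to statements about the generalized Fourier coefficients of sequences in $l(Q_{N_j})$, using the definitions of convolution \eqref{eq:dis.conv} and downsampling \eqref{eq:downsample}, and then to invoke the refinement equations \eqref{eq:refinement} together with the support assumption on $\widehat{\alpha}$ and $\widehat{\beta^{n}}$.

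First I would expand $f\in L_2(T^2)$ in the orthonormal system $\{P_{\ell}\}$ as $f=\sum_{\ell}\widehat{f}_{\ell}P_{\ell}$, and then substitute the series for $\boldsymbol{\varphi}_{j,k}$ from \eqref{eq:fra} into the inner product $\langle f,\boldsymbol{\varphi}_{j,k}\rangle$. Exchanging sum and integral gives
\begin{equation*}
(\mathrm{v}_{j})_{k}=\sqrt{\omega_{j,k}}\sum_{\ell=0}^{\infty}\overline{\widehat{\alpha}\!\left(\tfrac{\lambda_{\ell}}{2^{j}}\right)}\,\widehat{f}_{\ell}\,P_{\ell}(\boldsymbol{x}_{j,k}),
\end{equation*}
which shows $\mathrm{v}_{j}\in l(Q_{N_{j}})$ with generalized Fourier coefficients $\widehat{\mathrm{v}_{j}}_{\ell}=\overline{\widehat{\alpha}(\lambda_{\ell}/2^{j})}\,\widehat{f}_{\ell}$. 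The same computation applied to $\boldsymbol{\psi}^{n}_{j-1,k'}$ (whose scale is $j-1$ but whose translation uses $Q_{N_{j}}$) yields $\widehat{\mathrm{w}^{n}_{j-1}}_{\ell}=\overline{\widehat{\beta^{n}}(\lambda_{\ell}/2^{j-1})}\,\widehat{f}_{\ell}$.

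Next I would use that $\widehat{a^{\star}}(\xi)=\overline{\widehat{a}(\xi)}$ and $\widehat{b_{n}^{\star}}(\xi)=\overline{\widehat{b_{n}}(\xi)}$, so that by \eqref{eq:dis.conv},
\begin{equation*}
\widehat{\mathrm{v}_{j}\ast_{j}a^{\star}}_{\ell}=\overline{\widehat{a}\!\left(\tfrac{\lambda_{\ell}}{2^{j}}\right)\widehat{\alpha}\!\left(\tfrac{\lambda_{\ell}}{2^{j}}\right)}\,\widehat{f}_{\ell}=\overline{\widehat{\alpha}\!\left(\tfrac{\lambda_{\ell}}{2^{j-1}}\right)}\,\widehat{f}_{\ell},
\end{equation*}
where the second equality is \eqref{eq:refinement} with $\xi=\lambda_{\ell}/2^{j}$. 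Plugging this into the downsampling \eqref{eq:downsample} produces
\begin{equation*}
\bigl((\mathrm{v}_{j}\ast_{j}a^{\star})\hspace{-0.8mm}\downarrow_{j}\bigr)_{k}=\sqrt{\omega_{j-1,k}}\sum_{\lambda_{\ell}\le 2^{j-1}}\overline{\widehat{\alpha}\!\left(\tfrac{\lambda_{\ell}}{2^{j-1}}\right)}\,\widehat{f}_{\ell}\,P_{\ell}(\boldsymbol{x}_{j-1,k}).
\end{equation*}
Since $\widehat{\alpha}$ is supported in $[0,1/2]$, the factor $\overline{\widehat{\alpha}(\lambda_{\ell}/2^{j-1})}$ vanishes whenever $\lambda_{\ell}>2^{j-2}$, so the restriction $\lambda_{\ell}\le 2^{j-1}$ is vacuous and the expression equals $(\mathrm{v}_{j-1})_{k}$, which proves the first equality of \eqref{eq:decomp}. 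The second identity is obtained by the analogous computation: using the refinement $\widehat{\beta^{n}}(\lambda_{\ell}/2^{j-1})=\widehat{b_{n}}(\lambda_{\ell}/2^{j})\widehat{\alpha}(\lambda_{\ell}/2^{j})$, the Fourier coefficients of $\mathrm{v}_{j}\ast_{j}b_{n}^{\star}$ coincide termwise with those of $\mathrm{w}^{n}_{j-1}$, and no downsampling is needed since both sequences live in $l(Q_{N_{j}})$.

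The main obstacle is purely bookkeeping: one must correctly track where complex conjugates enter, since the inner product $\langle f,\boldsymbol{\varphi}_{j,k}\rangle$ introduces $\overline{\widehat{\alpha}}$ while the convolution \eqref{eq:dis.conv} is defined with $\widehat{h}$ (not $\overline{\widehat{h}}$), which is precisely why the adjoint masks $a^{\star}$ and $b_{n}^{\star}$—rather than $a$ and $b_{n}$—appear on the right-hand side of \eqref{eq:decomp}. The support hypothesis on $\widehat{\alpha}$ is needed only at the downsampling step, to guarantee that truncation to $\lambda_{\ell}\le 2^{j-1}$ discards no information.
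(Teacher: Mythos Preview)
Your proof is correct and follows essentially the same route as the paper: both arguments compute the framelet coefficients via orthonormality of $\{P_\ell\}$, invoke the refinement relations \eqref{eq:refinement} to factor $\overline{\widehat{\alpha}(\lambda_\ell/2^{j-1})}$ and $\overline{\widehat{\beta^n}(\lambda_\ell/2^{j-1})}$, and then identify the result with the convolution and downsampling operators. Your presentation is slightly more explicit in first isolating the generalized Fourier coefficients $\widehat{\mathrm{v}_j}_\ell$ and in articulating exactly where the support hypothesis on $\widehat{\alpha}$ is used (to make the truncation $\lambda_\ell\le 2^{j-1}$ harmless), but the substance is the same.
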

\begin{proof}
For $f\in L_{2}(T^2)$, by the orthonormality of $P_{\ell}$ and \eqref{eq:coe.framelets},
\begin{equation*}\label{eq:fr.coeff}
\begin{array}{ll}
(\mathrm{v}_{j-1})_{k} &= \displaystyle\sqrt{\omega_{j-1,k}}\sum_{\ell=0}^{\infty} \overline{\widehat{\alpha}\left(\frac{\lambda_{\ell}}{2^{j-1}}\right)}\widehat{f}_{\ell}\:P_{\ell}(\boldsymbol{x}_{j-1,k}),\quad k=1,\dots,N_{j-1},\\[5mm]
(\mathrm{w}^{n}_{j-1})_{k'} &= \displaystyle\sqrt{\omega_{j,k'}}\sum_{\ell=0}^{\infty} \overline{\widehat{\beta^{n}}\left(\frac{\lambda_{\ell}}{2^{j-1}}\right)}\widehat{f}_{\ell}\:P_{\ell}(\boldsymbol{x}_{j,k'}),\quad k'=1,\dots,N_{j},\;n=1,\dots,r.
    \end{array}
\end{equation*}
For low-pass, by \eqref{eq:refinement}, \eqref{eq:dis.conv} and \eqref{eq:downsample}, for $k=1,\ldots, N_{j-1}$,
\begin{align*}
    (\mathrm{v}_{j-1})_{k} &= \sqrt{\omega_{j-1,k}}\sum_{\lambda_{\ell}\le2^{j-1}}\widehat{f}_{\ell}\: \overline{\widehat{\alpha}\left(\frac{\lambda_{\ell}}{2^{j-1}}\right)}P_{\ell}(\boldsymbol{x}_{j-1,k})\notag\\
    &= \sqrt{\omega_{j-1,k}}\sum_{\lambda_{\ell}\le2^{j-1}}\widehat{f}_{\ell}\:\overline{\widehat{\alpha}\left(\frac{\lambda_{\ell}}{2^{j}}\right)}\:\overline{\widehat{a}\left(\frac{\lambda_{\ell}}{2^{j}}\right)} P_{\ell}(\boldsymbol{x}_{j-1,k})\\
    &= \sqrt{\omega_{j-1,k}}\sum_{\lambda_{\ell}\le2^{j-1}}\widehat{(\mathrm{v}_{j})}_{\ell}\:\overline{\widehat{a}\left(\frac{\lambda_{\ell}}{2^{j}}\right)}P_{\ell}(\boldsymbol{x}_{j-1,k})\\
    &= \bigl((\mathrm{v}_{j} \ast_{j} a^\star)\hspace{-0.8mm}\downarrow_{j}\bigr)_{k}.
\end{align*}
For high-passes, for $k'=1,\ldots, N_{j}$ and $n=1,\dots,r$,
\begin{align*}
(\mathrm{w}^{n}_{j-1})_{k'}
&=\sqrt{\omega_{j,k'}}\sum_{\lambda_{\ell}\le 2^{j-1}}\widehat{f}_{\ell}\: \overline{\widehat{\beta^{n}}\left(\frac{\lambda_{\ell}}{2^{j-1}}\right)}P_{\ell}(\boldsymbol{x}_{j,k'})\\
&=\sqrt{\omega_{j,k'}}\sum_{\lambda_{\ell}\le 2^{j-1}}\widehat{f}_{\ell}\: \overline{\widehat{\beta^{n}}\left(\frac{\lambda_{\ell}}{2^{j}}\right)}\:\overline{\widehat{b_{n}}\left(\frac{\lambda_{\ell}}{2^{j}}\right)}P_{\ell}(\boldsymbol{x}_{j,k'})\\
&=\sqrt{\omega_{j,k'}}\sum_{\lambda_{\ell}\le 2^{j-1}}\widehat{(\mathrm{v}_{j})}_{\ell}\:\overline{\widehat{b_{n}}\left(\frac{\lambda_{\ell}}{2^{j}}\right)} P_{\ell}(\boldsymbol{x}_{j,k'})\\
&= (\mathrm{v}_{j} \ast_{j} b_{n}^\star)_{k'}.
\end{align*}
These give \eqref{eq:decomp}.
\end{proof}

\section{Reconstruction for Tight Framelets}\label{sec:reconstruction}

We say the set of framelets $\{\boldsymbol{\varphi}_{j,k},\boldsymbol{\psi}_{j,k'}^{n}|n=1,\dots,r,\;  k=1,\dots,N_{j},\; k'=1,\dots,N_{j+1},\; j\in\mathbb{N}_{0}\}$ a \emph{tight frame} for $L_{2}(T^2)$ if the framelets are all in $L_{2}(T^2)$, and in the $L_2$ sense,
\begin{equation*}\label{eq:intro.fr.tight.f}
    f = \sum_{k=1}^{N_0} \langle{f,\boldsymbol{\varphi}_{j,k}}\rangle\boldsymbol{\varphi}_{j,k}  + \sum_{j=0}^{\infty} \sum_{k'=1}^{N_{j+1}}\sum_{n=1}^{r}\langle{f,\boldsymbol{\psi}_{j,k'}^{n}}\rangle\boldsymbol{\psi}_{j,k'}^{n} \quad \mbox{for all}~ f\in L_{2}(T^2),
\end{equation*}
or equivalently,
\begin{equation*}
\label{thmeq:framelet.tightness}
 \| f \|_{L_{2}(T^2)}^{2} = \sum_{k=1}^{N_{0}} \bigl|\langle{f,\boldsymbol{\varphi}_{j,k}}\rangle \bigr|^{2} + \sum_{j=0}^{\infty} \sum_{k'=1}^{N_{j+1}} \sum_{n=1}^{r}\bigl|\langle{f,\boldsymbol{\psi}_{j,k'}^{n}}\rangle\bigr|^{2}\quad \mbox{for all}~ f\in L_{2}(T^2).
\end{equation*}
The framelets are then said to be \emph{(semi-discrete) tight framelets}.

If the framelets are tight on the simplex, a function in $L_{2}(T^2)$ can be represented using the framelet coefficients. The following property as a consequence of \cite[Theorem~2.4]{WaZh2017} shows that the tightness of framelets is equivalent to a multiscale representation of framelets of a level by lower levels. 
\begin{proposition}[\cite{WaZh2017}]\label{prop:tightness} The semi-discrete framelets in \eqref{eq:fra} and \eqref{eq:frb} are tight if and only if for all $f\in L_{2}(T^2)$, the following identities hold:
\begin{align}
            & \lim_{j\to\infty} \sum_{k=1}^{N_{j}} \bigl|\langle{f,\boldsymbol{\varphi}_{j,k}}\rangle\bigr|^{2}=\| f \|_{L_{2}(T^2)}^{2},\notag\\
            & \sum_{k=1}^{N_{j+1}} \bigl|\langle{f,\boldsymbol{\varphi}_{j+1,k}}\rangle\bigr|^{2}=\sum_{k=1}^{N_{j}} \bigl|\langle{f,\boldsymbol{\varphi}_{j,k}}\rangle\bigr|^{2} + \sum_{k=1}^{N_{j+1}}\sum_{n=1}^{r}\bigl|\langle{f,\boldsymbol{\psi}_{j,k}^{n}}\rangle\bigr|^{2} ,\quad j\in\mathbb{N}_{0}.\label{thmeq:framelet.coe.j.j1}
\end{align}	
\end{proposition}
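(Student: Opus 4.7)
The plan is to set up a telescoping argument on the level-by-level low-pass energies. For brevity let $A_{j}(f) := \sum_{k=1}^{N_{j}}|\langle f,\boldsymbol{\varphi}_{j,k}\rangle|^{2}$ and $B_{j}(f) := \sum_{k'=1}^{N_{j+1}}\sum_{n=1}^{r}|\langle f,\boldsymbol{\psi}_{j,k'}^{n}\rangle|^{2}$. Tightness, in its Parseval form, reads $\|f\|_{L_{2}(T^{2})}^{2} = A_{0}(f) + \sum_{j=0}^{\infty} B_{j}(f)$, while the two identities to be shown equivalent become $A_{j+1}(f) = A_{j}(f) + B_{j}(f)$ for every $j\in\mathbb{N}_{0}$ and $A_{j}(f)\to \|f\|_{L_{2}(T^{2})}^{2}$ as $j\to\infty$.

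For the $(\Leftarrow)$ direction, assume both identities. Iterating the recursion \eqref{thmeq:framelet.coe.j.j1} from level $0$ up to level $J-1$ gives by finite induction
$$A_{J}(f) \;=\; A_{0}(f) + \sum_{j=0}^{J-1} B_{j}(f).$$
Letting $J\to\infty$ and invoking the limit identity converts the left-hand side to $\|f\|_{L_{2}(T^{2})}^{2}$, which is precisely the Parseval-form tightness identity.

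For the $(\Rightarrow)$ direction, assume tightness. The key step is to translate the coefficient sums to the Fourier side: using the series definitions \eqref{eq:fra}--\eqref{eq:frb}, the orthonormality of $\{P_{\ell}\}$, and the polynomial exactness of the quadrature rules ${Q}_{N_{j}}$, I would derive
\begin{equation*}
A_{j}(f) = \sum_{\ell=0}^{\infty}\Bigl|\widehat{\alpha}\Bigl(\tfrac{\lambda_{\ell}}{2^{j}}\Bigr)\Bigr|^{2}|\widehat{f}_{\ell}|^{2}, \qquad B_{j}(f) = \sum_{\ell=0}^{\infty}\sum_{n=1}^{r}\Bigl|\widehat{\beta^{n}}\Bigl(\tfrac{\lambda_{\ell}}{2^{j}}\Bigr)\Bigr|^{2}|\widehat{f}_{\ell}|^{2}.
\end{equation*}
Substituting into the tightness identity and exploiting that it holds for every $f\in L_{2}(T^{2})$ forces the pointwise scalar identity $|\widehat{\alpha}(\lambda_{\ell})|^{2} + \sum_{j=0}^{\infty}\sum_{n=1}^{r}|\widehat{\beta^{n}}(\lambda_{\ell}/2^{j})|^{2} = 1$ for each $\ell$. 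Decomposing $|\widehat{\alpha}(\lambda_{\ell}/2^{j})|^{2}$ and $|\widehat{\beta^{n}}(\lambda_{\ell}/2^{j})|^{2}$ via the refinement equations \eqref{eq:refinement} at $\xi = \lambda_{\ell}/2^{j+1}$ then yields the per-level identity $|\widehat{\alpha}(\lambda_{\ell}/2^{j+1})|^{2} = |\widehat{\alpha}(\lambda_{\ell}/2^{j})|^{2} + \sum_{n}|\widehat{\beta^{n}}(\lambda_{\ell}/2^{j})|^{2}$, which translates back via the Fourier-side formulas into the recursion $A_{j+1}(f) = A_{j}(f) + B_{j}(f)$; the limit identity then follows by the same telescoping as in the sufficient direction.

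The main technical obstacle is the Fourier-side representations for $A_{j}$ and $B_{j}$, which require the quadrature identity $\sum_{k=1}^{N_{j}}\omega_{j,k}P_{\ell}(\boldsymbol{x}_{j,k})\overline{P_{\ell'}(\boldsymbol{x}_{j,k})} = \delta_{\ell\ell'}$ to hold on the relevant range of $(\ell,\ell')$, i.e., sufficient polynomial exactness of the simplex rules ${Q}_{N_{j}}$ (and of ${Q}_{N_{j+1}}$ for $B_{j}$, given the scaling convention in \eqref{eq:frb}). Verifying that the specific constructions in Section~\ref{sec:constr.example} (triangular Kronecker lattices with Jacobi weights) meet this exactness is where the simplex-specific work lies; the algebraic skeleton of the argument is otherwise identical to that of \cite[Theorem~2.4]{WaZh2017}, on which the proposition is based.
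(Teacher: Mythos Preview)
The paper does not give its own proof of this proposition; it is quoted as a consequence of \cite[Theorem~2.4]{WaZh2017}, so there is no in-paper argument to compare against. Your $(\Leftarrow)$ direction via telescoping is correct and is the natural proof of sufficiency.

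Your $(\Rightarrow)$ direction contains a genuine gap. The diagonal Fourier-side identities $A_j(f)=\sum_\ell |\widehat\alpha(\lambda_\ell/2^j)|^2|\widehat f_\ell|^2$ and the analogue for $B_j$ require $\sum_k \omega_{j,k} P_\ell(\boldsymbol{x}_{j,k})\overline{P_{\ell'}(\boldsymbol{x}_{j,k})}=\delta_{\ell\ell'}$ on the relevant range, i.e.\ polynomial exactness of $Q_{N_j}$. But exactness is \emph{not} a hypothesis of Proposition~\ref{prop:tightness}; the proposition is stated as a general equivalence for arbitrary quadrature rules, and polynomial exactness enters only afterwards, in the passage surrounding \eqref{eq:tight.masks}, as an \emph{additional} assumption under which tightness reduces to a mask condition. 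Your closing suggestion --- to verify exactness for the triangular Kronecker lattices of Section~\ref{sec:constr.example} --- fails outright: the paper's Discussion section says explicitly that those lattices are low-discrepancy but \emph{not} exact for polynomials. The argument in \cite{WaZh2017} does not diagonalize; it keeps the off-diagonal quantities $\mathcal{U}_{\ell,\ell'}(Q_{N_j})=\sum_k \omega_{j,k}P_\ell(\boldsymbol{x}_{j,k})\overline{P_{\ell'}(\boldsymbol{x}_{j,k})}$ throughout, and both tightness and the level recursion \eqref{thmeq:framelet.coe.j.j1} are shown to be equivalent to the bilinear condition on $\mathcal{U}_{\ell,\ell'}$ displayed in the Discussion section. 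The equivalence in Proposition~\ref{prop:tightness} passes through that characterization, not through the diagonal identity you wrote.
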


The condition in \eqref{thmeq:framelet.coe.j.j1} implies that high-level framelet coefficients can be estimated by low levels. This then gives the \emph{reconstruction} for framelets.

The reconstruction depends on the property of the quadrature rules ${Q}_{N_{j}}$ for framelets. A quadrature rule $Q_N:=\{(w_{j},\boldsymbol{x}_{j})\}_{j=1}^{N}$ is said to be exact for polynomials up to degree $\ell$ if for $\ell'=0,\dots,\ell$,
\begin{equation*}
	\int_{T^{2}}p_{\ell'}(\boldsymbol{x})\mathrm{d}{\mu(\boldsymbol{x})} = \sum_{j=1}^{N}w_{j} p_{\ell'}(\boldsymbol{x}_{j})\quad \mbox{for all}~p_{\ell'}\in \mathcal{V}_{\ell'}.
\end{equation*}
When the quadrature rule ${Q}_{N_{j}}$, $j\in\mathbb{N}_{0}$, for framelets $\boldsymbol{\varphi}_{j,k}$ and $\boldsymbol{\psi}_{j,k}^{n}$ is exact for polynomials up to degree $2^j$, the tightness of the framelets is equivalent to the following condition on masks:
\begin{equation}\label{eq:tight.masks}
	\lim_{j\to\infty}\widehat{a}\left(\frac{\lambda_{\ell}}{2^j}\right)=1,\quad \left|\widehat{a}\left(\frac{\lambda_{\ell}}{2^j}\right)\right|^{2} + \sum_{n=1}^{r}\left|\widehat{b_{n}}\left(\frac{\lambda_{\ell}}{2^j}\right)\right|^{2}=1\quad \mbox{for~} j,\ell\in\mathbb{N}_{0},
\end{equation}
see \cite[Theorem~2.1 and Corollary~2.6]{WaZh2017}.

The \emph{upsampling} $\mathrm{v}\hspace{-0.8mm}\uparrow_{j}$, $j\ge1$, of a sequence $\mathrm{v}\in l({Q}_{N_{j-1}})$ is a sequence in $l({Q}_{N_{j}})$ given by
\begin{equation*}
	(\mathrm{v}\hspace{-0.8mm}\uparrow_{j})_{k} :=
\sum_{\lambda_{\ell}\le2^{j-2}}\widehat{\mathrm{v}}_{\ell}\sqrt{\omega_{j,k}}\: P_{\ell}(\boldsymbol{x}_{j,k}),\quad k = 1,\ldots, N_{j},
\end{equation*}
where $\widehat{\mathrm{v}}_{\ell}$ are the Fourier coefficients of $\mathrm{v}$ for basis $P_{\ell}$ and quadrature rule ${Q}_{N_{j-1}}$ on $T^{2}$.

The reconstruction involving the operations of convolution and upsampling is given by the following proposition.

\begin{proposition}
\label{prop:reconstruction}
Let the framelet coefficients for semi-discrete framelets in \eqref{eq:fra} and \eqref{eq:frb} be given by \eqref{eq:coe.framelets}, where the supports of $\widehat{\alpha}$ and $\widehat{\beta^{n}}$ are subsets of $[0,1/2]$, and \eqref{eq:tight.masks} holds.
Then, for $j\ge 1$, the reconstruction from level $j-1$ to level $j$ is 
\begin{equation}\label{eq:reconstr.j.j1}
  \mathrm{v}_{j} =  (\mathrm{v}_{j-1}\hspace{-0.8mm}\uparrow_{j}) \ast_{j} a+\sum_{n=1}^r   \mathrm{w}^{n}_{j-1} \ast_{j} b_{n}.
\end{equation}
\end{proposition}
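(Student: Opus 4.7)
The plan is to verify \eqref{eq:reconstr.j.j1} by matching generalized Fourier coefficients of both sides in the orthonormal basis $\{P_{\ell}\}$, mirroring the decomposition proof but running in the opposite direction and using the tightness condition \eqref{eq:tight.masks} to close the argument.

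First, from \eqref{eq:coe.framelets}, \eqref{eq:fra}, \eqref{eq:frb} and the orthonormality of $\{P_{\ell}\}$, one reads off the Fourier coefficients
\begin{equation*}
  \widehat{(\mathrm{v}_{j-1})}_{\ell} = \overline{\widehat{\alpha}\bigl(\tfrac{\lambda_{\ell}}{2^{j-1}}\bigr)}\,\widehat{f}_{\ell}, \quad
  \widehat{(\mathrm{w}^{n}_{j-1})}_{\ell} = \overline{\widehat{\beta^{n}}\bigl(\tfrac{\lambda_{\ell}}{2^{j-1}}\bigr)}\,\widehat{f}_{\ell}, \quad
  \widehat{(\mathrm{v}_{j})}_{\ell} = \overline{\widehat{\alpha}\bigl(\tfrac{\lambda_{\ell}}{2^{j}}\bigr)}\,\widehat{f}_{\ell}.
\end{equation*}
Applying the upsampling definition followed by the discrete convolution \eqref{eq:dis.conv} then gives
\begin{equation*}
 \bigl((\mathrm{v}_{j-1}\hspace{-0.8mm}\uparrow_{j})\ast_{j}a\bigr)_{k} = \sqrt{\omega_{j,k}}\sum_{\lambda_{\ell}\le 2^{j-2}} \widehat{(\mathrm{v}_{j-1})}_{\ell}\,\widehat{a}\bigl(\tfrac{\lambda_{\ell}}{2^{j}}\bigr)\, P_{\ell}(\boldsymbol{x}_{j,k}),
\end{equation*}
and an analogous Fourier expansion for each $\mathrm{w}^{n}_{j-1}\ast_{j}b_{n}$ over the indices where $\widehat{(\mathrm{w}^{n}_{j-1})}_{\ell}\neq 0$.

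The second step substitutes the conjugated refinement identities from \eqref{eq:refinement},
\begin{equation*}
 \overline{\widehat{\alpha}\bigl(\tfrac{\lambda_{\ell}}{2^{j-1}}\bigr)} = \overline{\widehat{a}\bigl(\tfrac{\lambda_{\ell}}{2^{j}}\bigr)}\,\overline{\widehat{\alpha}\bigl(\tfrac{\lambda_{\ell}}{2^{j}}\bigr)}, \qquad \overline{\widehat{\beta^{n}}\bigl(\tfrac{\lambda_{\ell}}{2^{j-1}}\bigr)} = \overline{\widehat{b_{n}}\bigl(\tfrac{\lambda_{\ell}}{2^{j}}\bigr)}\,\overline{\widehat{\alpha}\bigl(\tfrac{\lambda_{\ell}}{2^{j}}\bigr)},
\end{equation*}
which turn the $\ell$-th Fourier coefficient of the low-pass term into $|\widehat{a}(\lambda_{\ell}/2^{j})|^{2}\,\overline{\widehat{\alpha}(\lambda_{\ell}/2^{j})}\,\widehat{f}_{\ell}$ and that of $\mathrm{w}^{n}_{j-1}\ast_{j}b_{n}$ into $|\widehat{b_{n}}(\lambda_{\ell}/2^{j})|^{2}\,\overline{\widehat{\alpha}(\lambda_{\ell}/2^{j})}\,\widehat{f}_{\ell}$. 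Summing the low-pass and all $r$ high-pass contributions produces the factor $|\widehat{a}(\lambda_{\ell}/2^{j})|^{2}+\sum_{n=1}^{r}|\widehat{b_{n}}(\lambda_{\ell}/2^{j})|^{2}$, which collapses to $1$ by \eqref{eq:tight.masks}. The result is $\overline{\widehat{\alpha}(\lambda_{\ell}/2^{j})}\,\widehat{f}_{\ell}=\widehat{(\mathrm{v}_{j})}_{\ell}$ in each slot; repackaging the Fourier coefficients into a sequence via $\sqrt{\omega_{j,k}}P_{\ell}(\boldsymbol{x}_{j,k})$ and summing over $\ell$ recovers $(\mathrm{v}_{j})_{k}$, yielding \eqref{eq:reconstr.j.j1}.

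The main point requiring care is the matching of summation ranges: the upsampling truncates at $\lambda_{\ell}\le 2^{j-2}$, whereas $\widehat{(\mathrm{v}_{j})}_{\ell}$ is a priori supported on $\{\lambda_{\ell}\le 2^{j-1}\}$ through the $[0,1/2]$ support of $\widehat{\alpha}$. The refinement equation combined with \eqref{eq:tight.masks} resolves this. At any $\xi=\lambda_{\ell}/2^{j}\in(1/4,1/2]$ one has $2\xi\in(1/2,1]$ outside the supports of both $\widehat{\alpha}$ and $\widehat{\beta^{n}}$, so \eqref{eq:refinement} forces $\widehat{a}(\xi)\widehat{\alpha}(\xi)=0$ and $\widehat{b_{n}}(\xi)\widehat{\alpha}(\xi)=0$; if $\widehat{\alpha}(\xi)$ were nonzero this would imply $\widehat{a}(\xi)=\widehat{b_{n}}(\xi)=0$ for every $n$, contradicting $|\widehat{a}(\xi)|^{2}+\sum_{n}|\widehat{b_{n}}(\xi)|^{2}=1$. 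Consequently $\widehat{(\mathrm{v}_{j})}_{\ell}$ in fact lives on $\{\lambda_{\ell}\le 2^{j-2}\}$, matching exactly the range produced by the reconstructed right-hand side, so the termwise identification is valid throughout.
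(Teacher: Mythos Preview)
Your proof is correct and follows essentially the same route as the paper's: expand both sides in the orthonormal basis, invoke the refinement identities \eqref{eq:refinement}, and collapse the resulting factor $|\widehat{a}|^{2}+\sum_{n}|\widehat{b_{n}}|^{2}$ to $1$ via \eqref{eq:tight.masks}. Your final paragraph reconciling the $\lambda_{\ell}\le 2^{j-2}$ cutoff from the upsampling with the a~priori $\lambda_{\ell}\le 2^{j-1}$ support of $\widehat{(\mathrm{v}_{j})}$ is a genuine point the paper glosses over (it silently writes all sums over $\lambda_{\ell}\le 2^{j-1}$), and your argument that $\widehat{\alpha}$ must in fact vanish on $(1/4,1/2]$ under the stated hypotheses is a clean way to close that gap.
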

\begin{proof} 
By Proposition~\ref{prop:decomposition}, for $k=1,\dots,N_{j}$,
\begin{equation*}
((\mathrm{v}_{j-1}\hspace{-0.8mm}\uparrow_{j}) \ast_{j} a)_{k}
= \sqrt{\omega_{j,k}}\sum_{\lambda_{\ell}\le 2^{j-1}}\widehat{(\mathrm{v}_{j})}_{\ell} \left|\widehat{a}\left(\frac{\lambda_{\ell}}{2^j}\right)\right|^2{P_{\ell}}(\boldsymbol{x}_{j,k})
\label{eq:sub.tran.vj.a}
\end{equation*}
and
\begin{equation*}
(\mathrm{w}^{n}_{j-1} \ast_{j} b_{n})_{k}
= \sqrt{\omega_{j,k}}\sum_{\lambda_{\ell}\le 2^{j-1}}\widehat{(\mathrm{v}_{j})}_{\ell} \left|\widehat{b}_{n}\left(\frac{\lambda_{\ell}}{2^j}\right)\right|^2{P_{\ell}}(\boldsymbol{x}_{j,k}),\quad n = 1,\ldots,r.
\label{eq:sub.tran.vj.b}
\end{equation*}
These give
\begin{align*}
&\Bigl((\mathrm{v}_{j-1}\hspace{-0.8mm}\uparrow_{j}) \ast_{j} a+\sum_{n=1}^r   \mathrm{w}^{n}_{j-1} \ast_{j} b_{n}\Bigr)_{k}\\
&\quad= \sqrt{\omega_{j,k}}\sum_{\lambda_{\ell}\le2^{j-1}}\widehat{(\mathrm{v}_{j})}_{\ell}\left( \left|\widehat{a}\left(\frac{\lambda_{\ell}}{2^j}\right)\right|^2+\sum_{n=1}^r
 \left|\widehat{b}_{n}\left(\frac{\lambda_{\ell}}{2^j}\right)\right|^2\right) P_{\ell}(\boldsymbol{x}_{j,k})\\
&\quad=\sqrt{\omega_{j,k}}\sum_{\lambda_{\ell}\le 2^{j-1}}\widehat{(\mathrm{v}_{j})}_{\ell}\: P_{\ell}(\boldsymbol{x}_{j,k})\\
&= (\mathrm{v}_{j})_{k},
\end{align*}
thus proving \eqref{eq:reconstr.j.j1}.
\end{proof}

\begin{remark} 
\cite[Theorem~3.1]{WaZh2017} proves \eqref{eq:reconstr.j.j1} for general Riemannian manifolds when the quadrature rule is exact for polynomials up to degree $2^j$ and under condition \eqref{eq:tight.masks}.	Here we do not require that the quadrature rules of the framelets satisfy the polynomial exactness.
\end{remark}

Repeatedly using the decomposition and reconstruction in Propositions~\ref{prop:decomposition} and~ \ref{prop:reconstruction} gives multi-level framelet transforms. Figure~\ref{fig:ml.fmt} illustrates the decomposition and reconstruction for levels $0,\dots,j$.

\begin{figure}
\begin{minipage}{\textwidth}
\begin{minipage}{0.48\textwidth}
\centering
\includegraphics[scale=0.375]{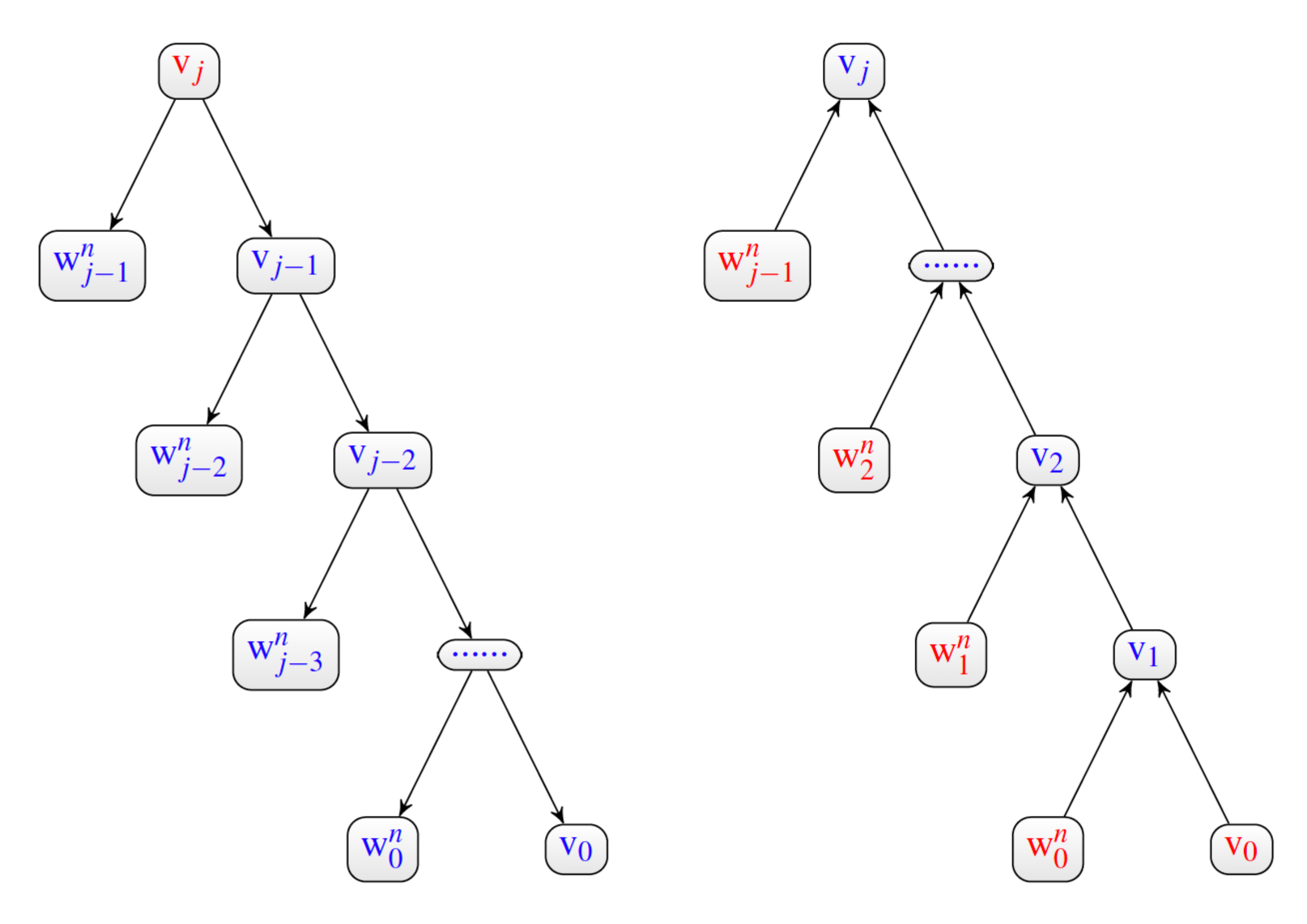}
\end{minipage}
\caption{The left diagram illustrates the decomposition of the framelets coefficients which computes all coefficients in lower levels by $\mathrm{v}_{j}$. The right shows the reconstruction of framelet coefficients $\mathrm{v}_{j}$ from the coefficients $\mathrm{v}_{0}$ and $\mathrm{w}^{n}_{0},\dots,\mathrm{w}^{n}_{j-1}$ of lower levels.}
\label{fig:ml.fmt}
\end{minipage}
\end{figure}

\section{Constructive Examples}\label{sec:constr.example}
From the above analysis, the construction of semi-discrete framelets needs an orthonormal basis for $L_{2}(T^2)$ and appropriate masks and quadrature rules.  

\textbf{Orthonormal bases.} 
One orthonormal basis can be constructed by the tensor product of Jacobi polynomials, see \cite[Proposition~2.4.1]{DuXu2014}. For $\tau,\gamma>-1$ and $\ell\ge0$, let $P^{(\tau,\gamma)}_{\ell}(t)$ be the Jacobi polynomial of degree $\ell$ with respect to the weight $(1-t)^{\tau}(1+t)^{\gamma}$ on [-1,1].
For $\boldsymbol{x}:=(x_{1},x_{2})\in T^{2}$ and $\ell\in\mathbb{N}_{0}$ and $m=0,\dots,\ell$,
let
\begin{equation}\label{eq:eigfx.Jcb}
  P_{\ell,m}(\boldsymbol{x}) := \sqrt{(\ell+1)(2m+1)}\: P^{(2m+1,0)}_{\ell-m}(2x_{1}-1)(1-x_{1})^{m}P^{(0,0)}_{m}\left(\frac{2x_{2}}{1-x_{1}}-1\right).
\end{equation}
Then $\{P_{\ell,m}|\hspace{0.2mm} m=0,\dots,\ell\}$ is an orthonormal basis of $\mathcal{V}_{\ell}$ and $\{P_{\ell,m} |\hspace{0.2mm} m=0,\dots,\ell,\ell\ge0\}$ forms an orthonormal basis of $L_{2}(T^2)$.

Sun \cite{Sun2003} constructs another orthonormal basis for $L_{2}(T^2)$, which is useful in discrete Fourier analysis on $T^{2}$, see \cite{LiSuXu2008,LiXu2010}.

\textbf{Masks.} We give an example of masks with two high-passes.
Let 
\begin{equation*}
  \nu(t) := t^{4}(35 - 84t + 70t^{2} - 20 t^{3}), \quad t\in\mathbb{R}.
\end{equation*}
By \cite[Chapter~4]{Daubechies1992}, the masks $a, b_{1}$ and $b_{2}$ can be defined by their Fourier series as
\begin{align}
  \widehat{a}(\xi)&: =
  \left\{\begin{array}{ll}
    1, & |\xi|<\frac{1}{8},\\[1mm]
    \cos\bigl(\frac{\pi}{2}\hspace{0.3mm} \nu(8|\xi|-1)\bigr), & \frac{1}{8} \le |\xi| \le \frac{1}{4},\\[1mm]
    0, & \frac14<|\xi|\le\frac12,
    \end{array}\right.\label{eq:mask.numer.s3.a} \\[1mm]
  \widehat{b_{1}}(\xi)&:  =\left\{\begin{array}{ll}
    0, & |\xi|<\frac{1}{8},\\[1mm]
    \sin\bigl(\frac{\pi}{2}\hspace{0.3mm} \nu(8|\xi|-1)\bigr), & \frac{1}{8} \le |\xi| \le \frac{1}{4},\\[1mm]
    \cos\bigl(\frac{\pi}{2}\hspace{0.3mm} \nu(4|\xi|-1)\bigr), & \frac14<|\xi|\le\frac12.
    \end{array}\right. \label{eq:mask.numer.s3.b1}\\[1mm]
  \widehat{b_{2}}(\xi)&:
  =\left\{\begin{array}{ll}
    0, & |\xi|<\frac{1}{4},\\[1mm]
    \sin\bigl(\frac{\pi}{2}\hspace{0.3mm} \nu(4|\xi|-1)\bigr), & \frac{1}{4} \le |\xi| \le \frac{1}{2},
    \end{array}\right. \label{eq:mask.numer.s3.b2}
\end{align}
which satisfy \eqref{eq:tight.masks}.

The corresponding scaling functions are
\begin{align}
  \widehat{\alpha}(\xi)&=
  \left\{\begin{array}{ll}
    1, & |\xi|<\frac{1}{4},\\[1mm]
    \cos\bigl(\frac{\pi}{2}\hspace{0.3mm} \nu(4|\xi|-1)\bigr), & \frac{1}{4} \le |\xi| \le \frac{1}{2},\\[1mm]
    0, & \hbox{else},
    \end{array}\right. \label{eq:scala}\\[1mm]
  \widehat{\beta^1}(\xi)&= \left\{\begin{array}{ll}
    \sin\left(\frac{\pi}{2}\hspace{0.3mm} \nu(4|\xi|-1)\right), & \frac{1}{4}\le|\xi|<\frac{1}{2},\\[1mm]
    \cos^2\left(\frac{\pi}{2}\hspace{0.3mm} \nu(2|\xi|-1)\right), & \frac{1}{2} \le |\xi| \le 1,\\[1mm]
    0, & \hbox{else},
    \end{array}\right. \label{eq:scalb1}\\[1mm]
  \widehat{\beta^2}(\xi)&= \left\{\begin{array}{ll}
   0, &|\xi|<\frac{1}{2},\\[1mm]
    \cos\left(\frac{\pi}{2}\hspace{0.3mm} \nu(2|\xi|-1)\right) \sin\left(\frac{\pi}{2}\hspace{0.3mm} \nu(2|\xi|-1)\right), & \frac{1}{2} \le |\xi| \le 1,\\[1mm]
    0, & \hbox{else}.
    \end{array}\right. \label{eq:scalb2}
\end{align}
Here, $\mathrm{supp}\:\widehat{\alpha}\subseteq [0,1/2]$ and $\mathrm{supp}\:\widehat{\beta^{n}}\subseteq [1/4,1]$, $n=1,2$. This means that the scaling of the framelet $\boldsymbol{\varphi}_{j,k}$ in \eqref{eq:fra} with the low-pass scaling function in \eqref{eq:scala} is half of the scaling of the framelets $\boldsymbol{\psi}_{j,k}^{1}$ and $\boldsymbol{\psi}_{j,k}^{2}$ in \eqref{eq:frb} with high-pass scaling functions in \eqref{eq:scalb1} and \eqref{eq:scalb2}. The high-pass framelets thus need to use a quadrature rule at the level $j+1$, one level higher than $\boldsymbol{\varphi}_{j,k}$. 

Figure~\ref{fig:masks} shows the Fourier series of masks $a$, $b_{1}$ and $b_{2}$ in \eqref{eq:mask.numer.s3.a}, \eqref{eq:mask.numer.s3.b1} and \eqref{eq:mask.numer.s3.b2}.

\begin{figure}
\vspace{1mm}
    \centering
    \includegraphics[scale=0.45]{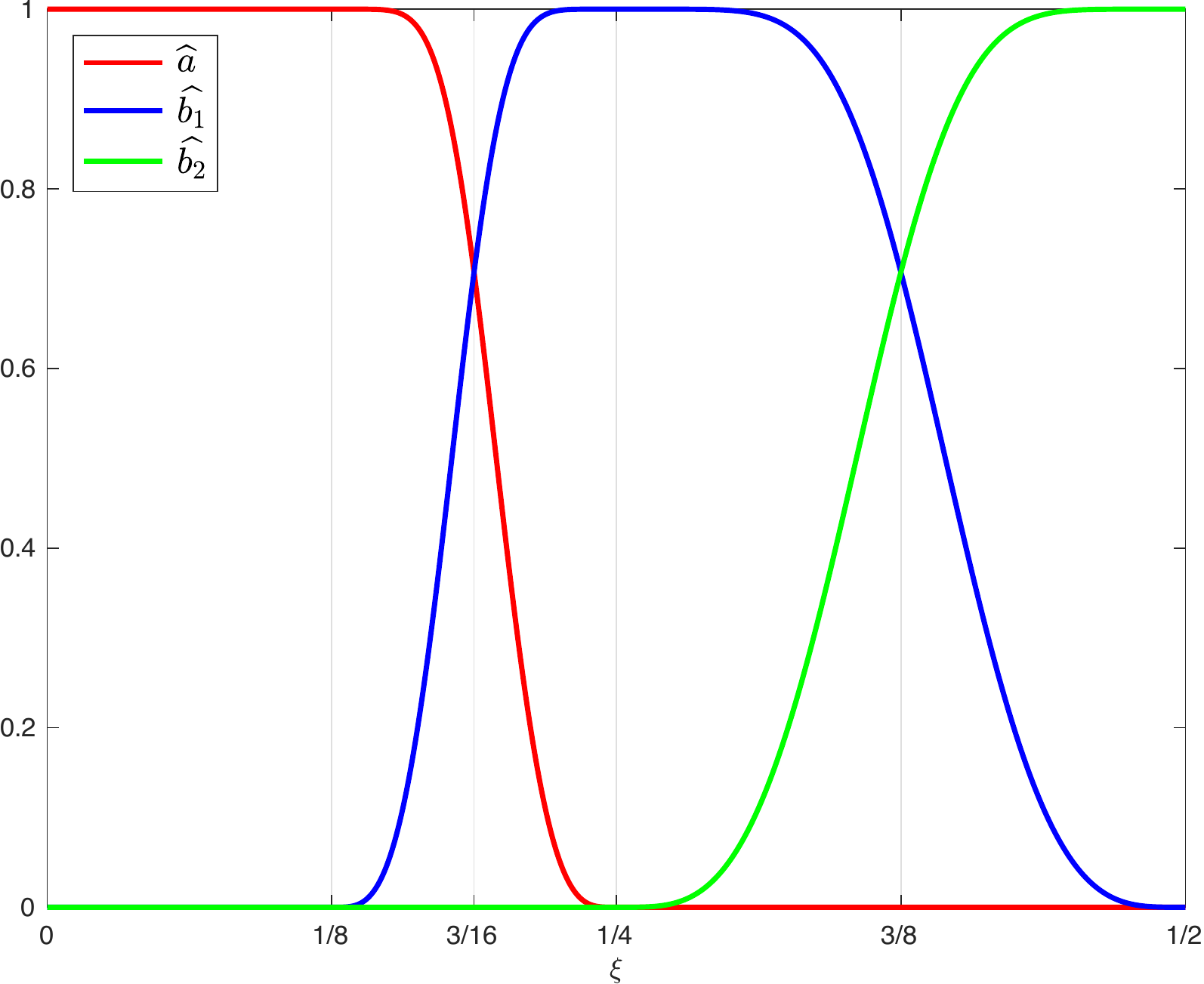}
    \caption{The red curve shows the Fourier series of the low-pass mask $\widehat{a}$ in \eqref{eq:mask.numer.s3.a} which has support in $[0,1/4]$. The blue and green curves show the Fourier series of high-pass masks $\widehat{b_{1}}$ and $\widehat{b_{2}}$ in \eqref{eq:mask.numer.s3.b1} and \eqref{eq:mask.numer.s3.b2} whose supports are subsets of $[0,1/2]$.}\label{fig:masks}
\end{figure}

\begin{figure}
\vspace{3mm}
    \centering
    \includegraphics[scale=0.45]{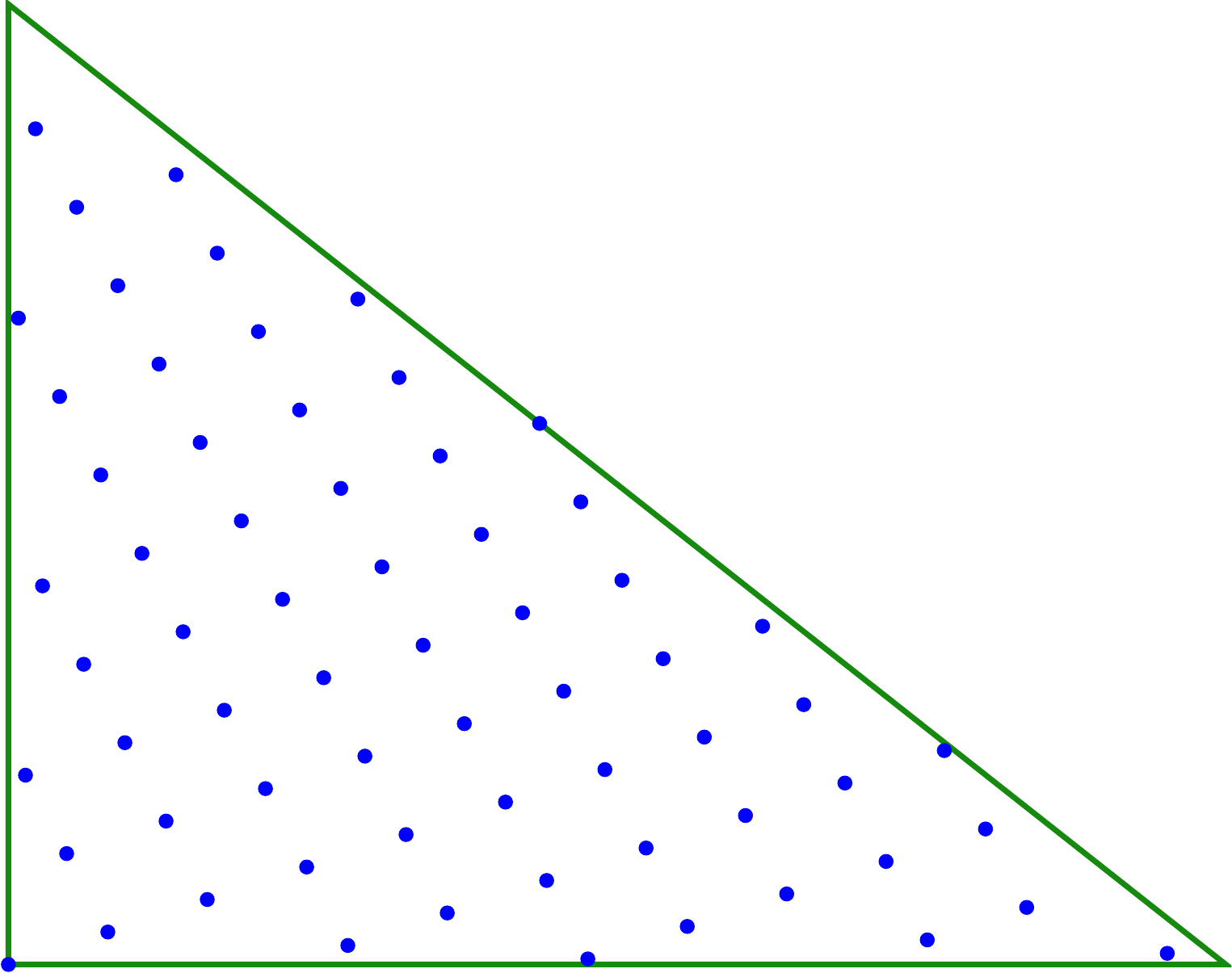}\vspace{2mm}
    \caption{Triangular Kronecker lattice with $65$ nodes for framelets $\boldsymbol{\varphi}_{3,k}$ and $\boldsymbol{\psi}_{2,k}^{n}$.}\label{fig:tri.pnt.N65}
\end{figure}

\textbf{Quadrature rules.}  We use triangular Kronecker lattices of Basu and Owen \cite{BaOw2015} with equal weights as the quadrature rules for framelets, which are shifted lattice points intersecting with the simplex. For the quadrature rule ${Q}_{N_{j}}$ of framelets, we use the triangular Kronecker lattice with at least $2^{2j}$ nodes, which are the translation points of the low-pass framelets $\boldsymbol{\varphi}_{j,k}$ at level $j$ and those of high-pass framelets $\boldsymbol{\psi}_{j-1,k'}^{n}$ at level $j-1$.  Figure~\ref{fig:tri.pnt.N65} shows the triangular Kronecker lattice with $N=65$ nodes on $T^{2}$ used for framelets at levels $2$ and $3$.

\textbf{Framelets.} Using the orthonormal basis in \eqref{eq:eigfx.Jcb}, scaling functions in \eqref{eq:scala}--\eqref{eq:scalb2} and triangular Kronecker lattices with equal weights, the framelets are, for $j\in\mathbb{N}_{0}$, 
\begin{equation}\label{eq:fra.Jcb}
    \boldsymbol{\varphi}_{j,k}(\boldsymbol{x}) = \frac{1}{\sqrt{N_{j}}}\sum_{\ell=0}^{\infty}\sum_{m=0}^{\ell}\widehat{\alpha}\left(\frac{\sqrt{\ell(\ell+2)}}{2^{j}}\right)\overline{P_{\ell,m}(\boldsymbol{x}_{j,k})}P_{\ell,m}(\boldsymbol{x}),\quad k=1,\dots,N_{j},
\end{equation}
and for $n =1,2$,
\begin{equation}\label{eq:frb.Jcb}
    \boldsymbol{\psi}_{j,k'}^{n}(\boldsymbol{x}) =  \frac{1}{\sqrt{N_{j+1}}}\sum_{\ell=0}^{\infty}\sum_{m=0}^{\ell} \widehat{\beta^{n}}\left(\frac{\sqrt{\ell(\ell+2)}}{2^{j}}\right)\overline{P_{\ell,m}(\boldsymbol{x}_{j+1,k'})}P_{\ell,m}(\boldsymbol{x}),\quad k'=1,\dots,N_{j+1}.
\end{equation}

Figure~\ref{fig:fr} shows the framelets $\boldsymbol{\varphi}_{j,k}$, $\boldsymbol{\psi}_{j,k'}^{1}$ and $\boldsymbol{\psi}_{j,k'}^{2}$ at level $j=5$ with $k=512$ and $k'=2048$, using orthonormal basis \eqref{eq:eigfx.Jcb} and scaling functions \eqref{eq:scala}, \eqref{eq:scalb1} and \eqref{eq:scalb2}, translated at the triangular Kronecker lattice points $\boldsymbol{x}_{5,512}$, $\boldsymbol{x}_{6,2048}$ and $\boldsymbol{x}_{6,2048}$. The total number of low-pass framelets $\boldsymbol{\varphi}_{j,k}$ at level $j=5$ is $N_5=1025$ and the total number of high-pass framelets $\boldsymbol{\psi}_{j,k'}^{n}$, $n=1$ or $2$, at level $j=5$ is $N_{6}=4097$. The pictures show that the framelets $\boldsymbol{\varphi}_{5,512}$, $\boldsymbol{\psi}_{5,2048}^{1}$ and $\boldsymbol{\psi}_{5,2048}^{2}$ are radial functions on $T^{2}$ with centers at the translation points $\boldsymbol{x}_{5,512}$, $\boldsymbol{x}_{6,2048}$ and $\boldsymbol{x}_{6,2048}$ respectively.

We observe that the high-pass framelets $\boldsymbol{\psi}_{5,2048}^{1}$ and $\boldsymbol{\psi}_{5,2048}^{2}$ are highly concentrated at the translation point $\boldsymbol{x}_{6,2048}$, and are more localised than the low-pass framelet at the same level. This illustrates that the high-pass framelets can be used to depict details of a function on $T^{2}$ in multiresolution analysis.

\begin{figure}
  \begin{minipage}{\textwidth}
  \begin{minipage}{0.49\textwidth}
  \centering
  \includegraphics[trim = 0mm 0mm 0mm 0mm, width=1.1\textwidth]{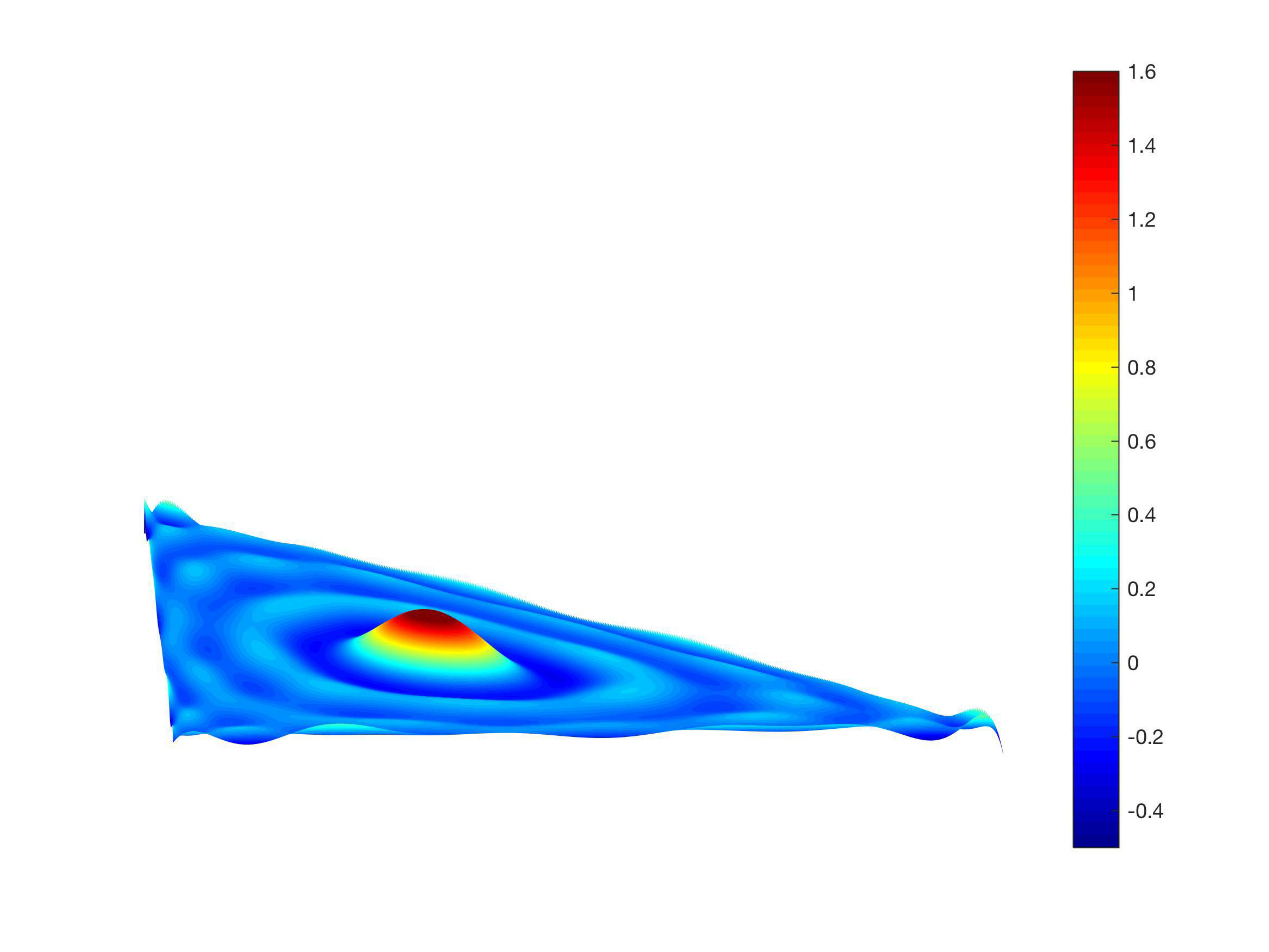}\\[-6mm]
  {$\boldsymbol{\varphi}_{5,512}$~~~~~}\label{fig:fra}
  \end{minipage}\\
  \begin{minipage}{0.49\textwidth}
  \centering
  \includegraphics[trim = 0mm 0mm 0mm 0mm, width=1.1\textwidth]{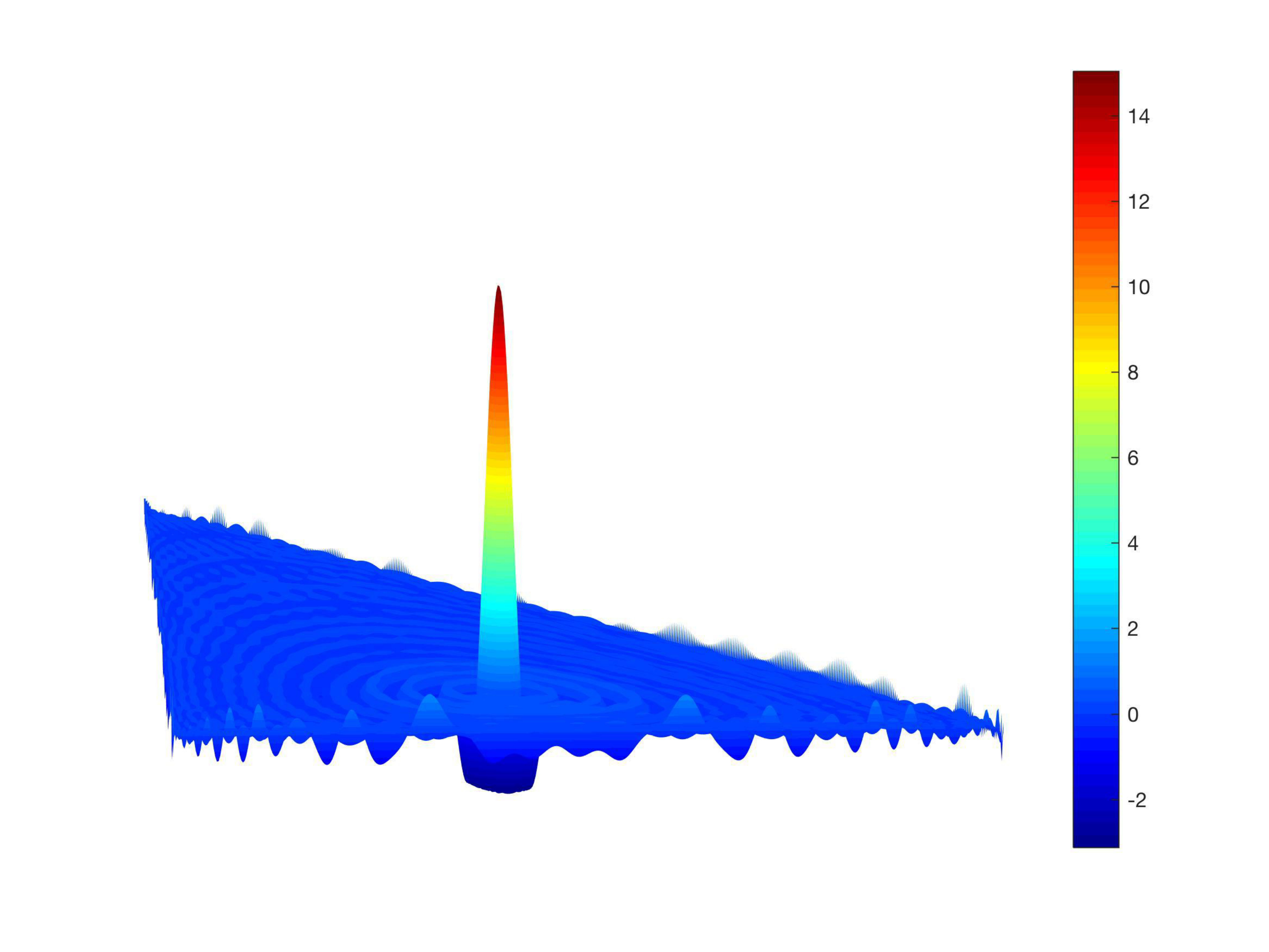}\\[-4mm]
  {$\boldsymbol{\psi}_{5,2048}^{1}$~~~~~}\label{fig:frb1}
  \end{minipage}
  \begin{minipage}{0.49\textwidth}
  \centering
  \includegraphics[trim = 0mm 0mm 0mm 0mm, width=1.1\textwidth]{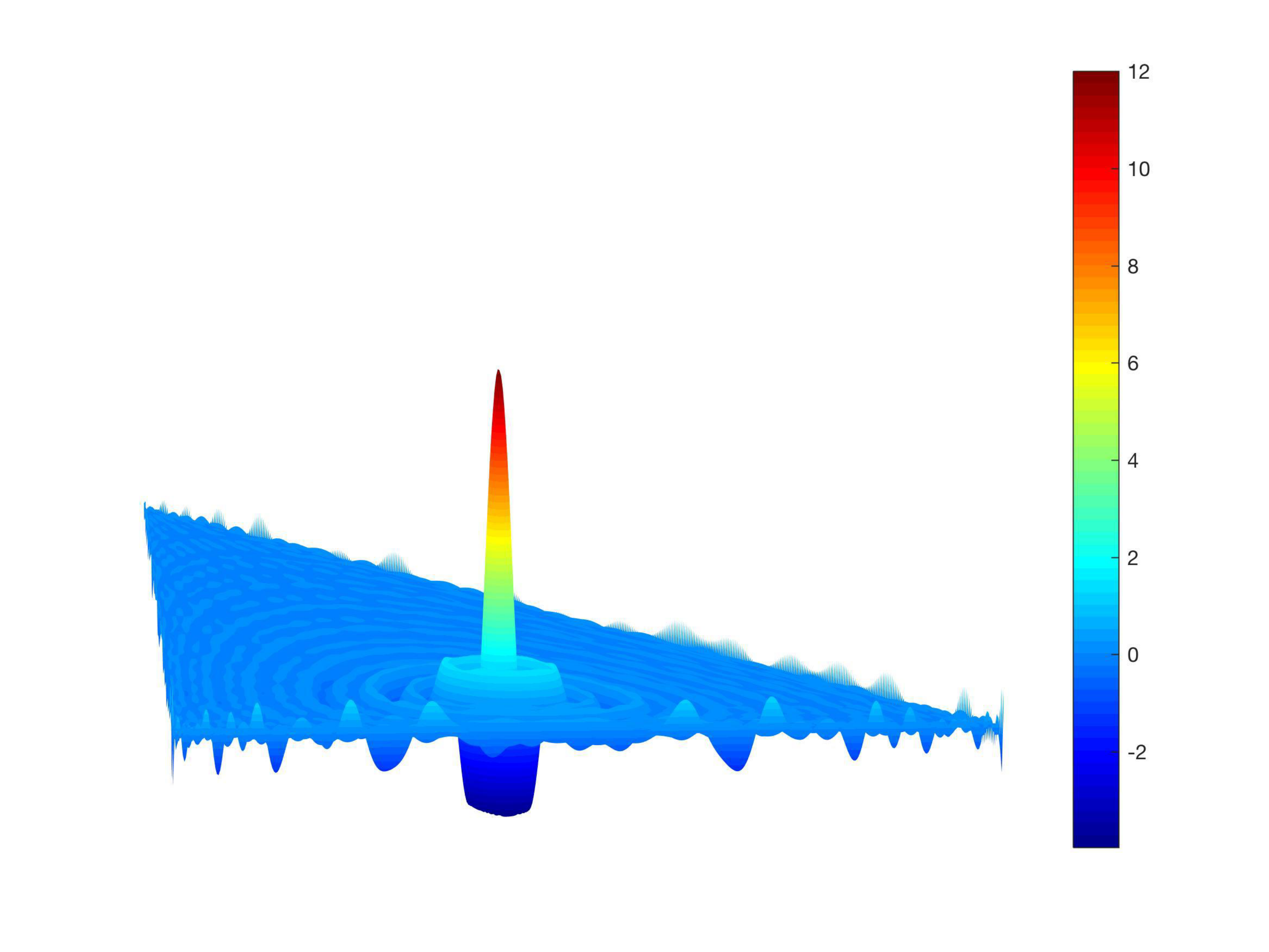}\\[-4mm]
  {$\boldsymbol{\psi}_{5,2048}^{2}$}\label{fig:frb2}
  \end{minipage}
  \end{minipage}\\[1mm]
  \begin{minipage}{\textwidth}
\caption{\scriptsize The three pictures show framelets $\boldsymbol{\varphi}_{5,512}$, $\boldsymbol{\psi}_{5,2048}^{1}$ and $\boldsymbol{\psi}_{5,2048}^{2}$ given by \eqref{eq:fra.Jcb} and \eqref{eq:frb.Jcb} at level $j=5$.
}\label{fig:fr}
\end{minipage}
\end{figure}

\section{Fast Computing}\label{sec:fast.comput}
The framelet transforms on $T^{2}$ can be represented by discrete Fourier transforms on the simplex. This implies a fast computational strategy of the decomposition and reconstruction for framelets.

We use the notation of Sections~\ref{sec:decomposition} and \ref{sec:reconstruction}. Let $j\in\mathbb{N}_{0}$ and let $\Lambda_j$ be the largest integer $\ell$ such that $\lambda_{\ell}\le 2^{j-1}$.
The \emph{discrete Fourier transform} (DFT) for a sequence $\mathrm{u}\in l(\Lambda_j)$ is the sequence $\mathbf{F}_{j} \mathrm{u}$ in $l(N_j)$ such that
\begin{equation}\label{eq:fft}
(\mathbf{F}_{j} \mathrm{u})_{k} :=   \sum_{\ell=0}^{\Lambda_j}\mathrm{u}_{\ell}\: \sqrt{\omega_{j,k}}\:P_{\ell}(\boldsymbol{x}_{j,k}),\quad k = 0,\ldots, N_{j}.
\end{equation}
The \emph{adjoint discrete Fourier transform} (adjoint DFT) $\mathbf{F}_{j}^*$ of a sequence $\mathrm{v}\in l(N_j)$ is the sequence $\mathbf{F}_{j}^*\mathrm{v}$ in $l(\Lambda_j)$ such that
\begin{equation}\label{eq:adjfft}
(\mathbf{F}_{j}^*\mathrm{v})_\ell :=    \sum_{k=0}^{N_{j}}  {\mathrm{v}}_{k}\: \sqrt{\omega_{j,k}}\:\overline{P_{\ell}(\boldsymbol{x}_{j,k})}, \quad \ell=0,\dots,\Lambda_j.
\end{equation}
The DFTs on the simplex in \eqref{eq:fft} and \eqref{eq:adjfft} using the orthonormal basis $P_{\ell}$ are the analogues of DFTs for square-integrable periodic functions on $\mathbb{R}$ which use the orthogonal basis $e^{2\pi{\mathrm{i}} \ell' x}$, $\ell'\in\mathbb{Z}$.

By \eqref{eq:fft} and \eqref{eq:adjfft}, we can rewrite the decomposition in \eqref{eq:decomp} and reconstruction in \eqref{eq:reconstr.j.j1} as
\[
  \mathrm{v}_{j-1} = \mathbf{F}_{j-1}(\widehat{\mathrm{v}_{j}\ast_{j} a^\star}),\quad \mathrm{w}^{n}_{j-1} = \mathbf{F}_{j}(\widehat{\mathrm{v}_{j}\ast_{j} {(b_{n})}^\star}),\quad n = 1,\ldots, r
\]
and
\[
  \mathrm{v}_{j} = \left(\mathbf{F}_{j}^*(\mathrm{v}_{j-1})\right) \ast_{j} a + \sum_{n=1}^r \left(\mathbf{F}_{j}^*(\mathrm{w}^{n}_{j-1})\right) \ast_{j} b_{n}.
\]
This means that the decomposition from level $j$ to level $j-1$ is the DFTs of convolutions of the level-$j$ framelet coefficients with masks, and that the reconstruction from level $j-1$ to level $j$ is the sum of convolutions of the adjoint DFTs of level-$(j-1)$ coefficients with masks. Since the convolution is the sum of point-wise multiplications, the computational steps of the framelet transforms are in proportion to those of DFTs on the simplex. 

The FFT on $T^{2}$ uses, up to log factors, $\mathcal{O}(N)$ operations for an input sequence of size $N$.
If for $j\ge1$, the ratio $N_{j}/N_{j-1}$ of the numbers of the nodes of the quadrature rules ${Q}_{N_{j}}$ and ${Q}_{N_{j-1}}$ is equivalent to a constant $C$, $C>1$, the computational steps of the framelet transforms (both the decomposition and reconstruction) between levels $0,1,\dots,{J}$, ${J}\ge1$, are $\mathcal{O}((r+1)N_J)$ for the sequence $\mathrm{v}_{J}$ of the framelet coefficients of size $N_{{J}}$, and the redundancy rate of the framelet transforms is also $\mathcal{O}((r+1)N_J)$. The framelets with the quadrature rules using triangular Kronecker lattices, as shown in Section~\ref{sec:constr.example}, satisfy that $N_{j}/N_{j-1}\sim 4$. Thus, the framelet transforms between levels $0$ to $J$ have  computational steps in proportion to $2^{2J}$.


\section{Discussion}
In the paper, we only consider the framelet transforms for one framelet system with starting level $0$. The results can be generalized to a sequence of framelet systems as \cite{WaZh2017,Han2010}, which will allow one more flexibility in constructing framelets.

The decomposition holds for framelets with any quadrature rules on the simplex. In order to achieve the tightness of the framelets and thus exact reconstruction for functions on the simplex by framelets, the quadrature rules are required to be exact for polynomials,
see Sections~\ref{sec:decomposition} and \ref{sec:reconstruction}.
However, polynomial-exact rules are generally difficult to construct on the simplex, see \cite[Chapter~3]{DuXu2014}.

Triangular Kronecker lattices with equal weights used in Section~\ref{sec:constr.example} are low-discrepancy \cite{BaOw2015}, but not exact for polynomials. In this case, the reconstruction will incur errors. 
To overcome this, the masks and quadrature rules shall be constructed to satisfy the condition
\begin{equation*}
\overline{\widehat{a}\left(\frac{\lambda_{\ell}}{2^{j}}\right)}{\widehat{a}}\left(\frac{\lambda_{\ell'}}{2^{j}}\right)\mathcal{U}_{\ell,\ell'}({Q}_{N_{j-1}})
+\sum_{n=1}^r\overline{\widehat{b_{n}}\left(\frac{\lambda_{\ell}}{2^{j}}\right)}{\widehat{b_{n}}}\left(\frac{\lambda_{\ell'}}{2^{j}}\right)\mathcal{U}_{\ell,\ell'}({Q}_{N_{j}}) = \mathcal{U}_{\ell,\ell'}({Q}_{N_{j}}), \label{thmeq:fr.tightness.a.mask.cond}
 \end{equation*}
 for $j\ge 1$ and for $\ell,\ell'\in\N_0$ satisfying $\overline{\widehat{\alpha}\left(\frac{\lambda_{\ell}}{2^{j}}\right)}{\widehat{\alpha}}\left(\frac{\lambda_{\ell'}}{2^{j}}\right)\neq0$, 
 where 
  $ \mathcal{U}_{\ell,\ell'}({Q}_{N_{j}}):=\sum_{k=0}^{N_j}\omega_{j,k}P_{\ell}(\boldsymbol{x}_{j,k})\overline{P_{\ell'}(\boldsymbol{x}_{j,k})}$ is the numerical integration of $P_{\ell}\overline{P_{\ell'}}$ over $T^{2}$ by quadrature rule ${Q}_{N_{j}}$,
 see \cite[Theorem~2.4]{WaZh2017}. 
 This condition requires that the quadrature rules for framelets have good properties for numerical integration over the simplex.
 Besides the triangular Kronecker lattices used in the paper, one may consider other quadrature rules with low discrepancy on the simplex, for example, the analogues to quasi-Monte Carlo (QMC) points in the cube and spheres, see \cite{BrSaSlWo2014,DiKuSl2013,SlJo1994}.

To implement the fast algorithms for the DFTs in \eqref{eq:fft} and \eqref{eq:adjfft}, we need fast transforms for the bases $P_{\ell}$. For example, we can represent the bases $P_{\ell,m}$ in \eqref{eq:eigfx.Jcb} by trigonometric polynomials and apply the FFT on $\mathbb{R}$ to achieve fast algorithms for the DFTs on $T^{2}$.

\begin{acknowledgement}
The authors thank the anonymous referees for their valuable comments.
We are grateful to Kinjal Basu, Yuan Xu and Xiaosheng Zhuang for their helpful discussions.
\end{acknowledgement}


\end{document}